\newcommand\Z{{\mathbb Z}} 
\newcommand\R{{\mathbb R}} 
\newcommand\eps{\varepsilon} 
\newcommand{\ceil}[1]{\lceil #1\rceil}
\newcommand{\floor}[1]{\lfloor #1\rfloor} 
\DeclareMathOperator{\conv}{conv} 
\DeclareMathOperator{\excess}{excess} 
\newtheorem{theorem}{Theorem} 
\newtheorem{lemma}[theorem]{Lemma}
\newtheorem{proposition}[theorem]{Proposition}  
\newtheorem{conjecture}[theorem]{Conjecture}
\title{Convex Equipartitions of Colored Point Sets} 
\author{Pavle V. M. Blagojevi\'c%
\footnote{The authors are supported by DFG via
the Collaborative Research Center TRR~109 ``Discretization in Geometry and Dynamics.''
PVMB is also supported by the grant ON 174008 of the Serbian Ministry of Education and Science.
GMZ is also supported by DFG via the Berlin Mathematical School BMS.}\\
Institut f\"ur Mathematik, FU Berlin\\Arnimallee 2, 14195 Berlin, Germany\\
\url{blagojevic@math.fu-berlin.de} 
\and
G\"unter Rote\\
Institut f\"ur Informatik, FU Berlin\\Takustraße 9, 14195 Berlin, Germany\\
\url{rote@inf.fu-berlin.de}
\and
Johanna K. Steinmeyer\\ 
Institut f\"ur Mathematik, FU Berlin\\Arnimallee 2, 14195 Berlin, Germany\\
\url{steinmeyer@math.fu-berlin.de}
\and
G\"unter M. Ziegler\\ 
Institut f\"ur Mathematik, FU Berlin\\Arnimallee 2, 14195 Berlin, Germany\\
\url{ziegler@math.fu-berlin.de}}
\date{May 23, 2017; revised December 11, 2017}
\begin{document} 

\maketitle

\begin{abstract}
We show that any $d$-colored set of points in general position in~$\R^d$ can be partitioned into $n$ subsets with disjoint convex hulls such that the set of points and all color classes are partitioned as evenly as possible.	
This extends results by Holmsen, Kyn\v{c}l \& Valculescu (2017) and establishes a special case of their general conjecture.
Our proof utilizes a result obtained independently by Soberón and by Karasev in 2010, on simultaneous
equipartitions of $d$ continuous measures in~$\R^d$ by $n$ convex
regions. This gives a convex partition of $\R^d$ with the desired properties, except
that points may lie on the boundaries of the regions.
In order to resolve the ambiguous assignment of these points, we set up a network flow problem.
The equipartition of the continuous measures gives a fractional flow.
The existence of an integer flow then yields the desired partition of the point set.
\end{abstract}

\section{Introduction}
A (finite) set $X$ of points in $\R^d$ is in \emph{general position} if every subset of size at most $d+1$ is affinely independent.
A partition $X=X_1\sqcup\dots\sqcup X_m$ of $X$ into $m$ disjoint subsets is an \emph{$m$-coloring} of $X$.
The sets $X_1,\ldots,X_m$ are called \emph{color classes} and we say that the set $X$ is 
{\emph{$m$-colored}}. 
A subset $Y\subseteq X$ containing points from at least $j$ distinct color classes 
is said to be \emph{$j$-colorful}.  

In this language, the classical partition result of Akiyama and Alon reads as follows.

\begin{theorem}[Akiyama--Alon \cite{AA89}]\label{AA}
	Let $n,d$ be positive integers, and let $X$ be a $d$-colored set of points in general position in $\R^d$, 
	with each color class containing $n$ points. 
	Then there is a partition of $X$ into $n$ $d$-colorful sets of size $d$ whose convex hulls are pairwise disjoint. 
\end{theorem}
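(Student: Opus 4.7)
My plan, following the abstract, is to apply the Soberón--Karasev convex equipartition theorem to a suitable family of continuous measures built from $X$, and then to resolve the remaining point-to-region ambiguity via a combinatorial assignment argument.

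For $\epsilon > 0$ small, replace each point $x \in X$ by the uniform probability measure on the closed ball $B_x := \overline{B(x,\epsilon)}$. For each color $i \in \{1,\dots,d\}$, let $\mu_i$ be the sum of these measures over the $n$ points of color $i$; then $\mu_i$ is an absolutely continuous measure on $\R^d$ of total mass $n$, and for $\epsilon$ small enough the balls are pairwise disjoint. Applying Soberón--Karasev to $\mu_1,\dots,\mu_d$ yields a partition $\R^d = C_1 \cup \dots \cup C_n$ into $n$ closed convex regions with pairwise disjoint interiors satisfying $\mu_i(C_j) = 1$ for all $i,j$. Heuristically, each $C_j$ should capture one ball of each color, giving the desired $d$-colorful $d$-set in $C_j$.

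The subtlety is that a ball $B_x$ may be cut by the hyperplanes bounding the regions, so the correspondence between points and regions is not yet combinatorial. To resolve this, for each color $i$ form the $n \times n$ matrix $A^{(i)}$ with entries $A^{(i)}_{x,j} := \mu_i(B_x \cap C_j)$, indexed by points $x$ of color $i$ and regions $C_j$. Its row sums equal $1$ (the total mass of each ball) and its column sums equal $1$ (by the equipartition), so $A^{(i)}$ is doubly stochastic. By the Birkhoff--von Neumann theorem, its support contains a permutation, i.e.\ a perfect matching $\sigma_i$ sending each point of color $i$ to a distinct region. Doing this independently for each color assigns to every region $C_j$ exactly one point of each color, yielding a $d$-colorful $d$-set.

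Finally, the convex hull of the $d$ points assigned to $C_j$ lies in $C_j$: whenever $A^{(i)}_{x,j} > 0$, the ball $B_x$ meets $C_j$ with positive measure, which for $\epsilon$ small enough forces $x \in C_j$. Since the regions have pairwise disjoint interiors, and since a small generic perturbation (within general position) ensures no point of $X$ lies on a boundary hyperplane, the resulting $n$ convex hulls are pairwise disjoint. The main obstacle is this boundary-assignment step; the crucial ingredient that makes it work is the double stochasticity of $A^{(i)}$, which reduces the task to an application of Birkhoff--von Neumann (equivalently, of Hall's marriage theorem to the support bipartite graph), converting the fractional equipartition into the required integral assignment.
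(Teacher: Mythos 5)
Your overall strategy is the same as the paper's: replace points by $\eps$-balls, apply the Sober\'on--Karasev theorem to the resulting $d$ measures, and then round the fractional incidence data to an integral assignment. Your use of Birkhoff--von Neumann on the doubly stochastic matrices $A^{(i)}$ is a correct and genuinely cleaner route for this special case ($|X_i|=n$ for all $i$, exactly one point of each color per region): it decouples the colors and replaces the paper's general network-flow rounding (Proposition~\ref{rounding-flows}) by $d$ independent bipartite matchings. That part of your argument is sound.

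The gap is in the final geometric step. You claim that $\mu_i(B_x\cap C_j)>0$ ``for $\epsilon$ small enough forces $x\in C_j$.'' This is false, and cannot be repaired by shrinking $\epsilon$: the regions $C_1,\dots,C_n$ are produced by Sober\'on--Karasev \emph{after} $\eps$ and the measures are fixed, so you cannot choose $\eps$ in response to them, and for any $\eps>0$ the equipartition may (and in general will) have boundaries cutting through some balls. A point $x$ whose ball meets $C_j$ may itself lie strictly inside a different region $C_k$; it is then assigned to $C_j$ but sits in $C_k$, so the convex hull of the points assigned to $C_j$ need not be contained in $C_j$, and disjointness of the hulls does not follow from disjointness of the regions' interiors. (Your fallback of ``a small generic perturbation of the points'' does not help either: perturbing $X$ after the regions are fixed destroys the equipartition, and the problem is not points lying \emph{on} boundaries but points lying on the \emph{wrong side}.) This is exactly the issue the paper isolates in Lemma~\ref{convex general}: one fixes $\eps$ in advance, depending only on the general position of $X$, so that no $\ell\le d$ of the balls meet a common $(\ell-1)$-flat; then for any hyperplane $H$ separating two regions, a perturbation/induction argument shows that the points whose balls meet $H^+$ and those whose balls meet $H^-$ still have disjoint convex hulls. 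Your proof needs this lemma (or an equivalent argument) to close; without it, the last paragraph asserts the conclusion rather than proving it.
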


Akiyama and Alon gave a beautifully simple proof using a discrete
version of the ham-sandwich theorem, which is a well known consequence of the Borsuk--Ulam theorem.
The use of such
topological methods created a lot of progress in solving discrete partitioning problems.
In fact, many related  
partition results have both a continuous mass partition as well as a discrete colored version\textemdash often equivalent.

In this paper, we consider the following conjecture of  Holmsen, Kyn\v{c}l and Valculescu {\cite[Conjecture~3]{HKV16}}.

\begin{conjecture}[Holmsen--Kyn\v{c}l--Valculescu, 2016]\label{conjecture} 
	Let $m,k,n$, and $d$ be positive integers, and let $X$ be an $m$-colored set of $kn$ points in general position in
	$\R^d$. 
	Suppose there is a partition of~$X$ into $n$ $d$-colorful sets of size $k$.
	Then there is also such a partition with the additional geometric property that
	the convex hulls of the $n$ sets are pairwise disjoint.
\end{conjecture}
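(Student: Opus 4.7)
My plan is to follow the two-stage strategy suggested by the abstract: first produce a convex partition of $\R^d$ by combining a continuous mass-partition result with the color data, then discretize via a network-flow argument, taking care that the $d$-colorfulness requirement, which is new compared with the $m=d$ case treated in the paper, is enforced throughout.

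The first step is to encode the $m$ color classes as $d$ continuous measures in $\R^d$. Write $n_i$ for the number of points of color $i$; by hypothesis $\sum_i n_i = kn$ and the combinatorial partition into $d$-colorful parts of size $k$ provides a Hall-type balancing condition on the $n_i$ (no set of $m-d+1$ colors can dominate a part). Using this condition I would split or re-weight the colors so as to obtain $d$ continuous measures $\mu_1,\dots,\mu_d$ of equal total mass, each built by placing a small bump at every point of the corresponding color group with area proportional to a chosen weight. An equipartition of these $d$ measures by $n$ convex regions, guaranteed by the theorem of Sober\'on and Karasev, yields pairwise interior-disjoint convex regions $R_1,\dots,R_n$ each capturing $1/n$ of every $\mu_j$; in particular each $R_j$ contains approximately $k$ points of $X$, with any discrepancy caused only by points lying on boundaries.

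The second step resolves the boundary ambiguity by an integer network flow, now enriched so that $d$-colorfulness is preserved. I would build a directed graph whose source feeds one unit to each point, each point forwards its unit only to regions whose closure contains it, and each region passes its total through color-tracking intermediate nodes to a sink of capacity $k$ per region. Fractional feasibility of the pure transportation part follows from the Sober\'on--Karasev equipartition, exactly as in the paper. On top of this I impose colorfulness constraints: for each region $R_j$ and color $c$, an arc of capacity $\min(n_c, k-d+1)$ bounds how much of $R_j$ any single color may occupy, and auxiliary ``colorfulness'' arcs force at least $d$ colors to appear in $R_j$. A fractional solution to the enriched system is obtained by convex-combining the equipartition flow with the (abstract) flow induced by the combinatorial $d$-colorful partition granted in the hypothesis.

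The main obstacle will be passing from fractional to integer feasibility once the colorfulness side constraints are present, since these destroy the transportation structure which made the paper's rounding argument go through by total unimodularity. I would attempt this via a matroid-intersection reformulation, where on the point-to-region bipartite incidence one intersects the transportation matroid (each region receives exactly $k$ points) with a partition-style matroid encoding the ``at least $d$ colors'' requirement; Edmonds' theorem would then supply an integer extreme point. An alternative route, which I expect to be easier to control combinatorially, is a local-search argument starting from the combinatorial $d$-colorful partition given in the hypothesis and iteratively swapping points between parts guided by the Sober\'on--Karasev regions, using a potential function that measures convex-hull overlap and verifying that every legal swap which respects colorfulness strictly decreases it. Either way, the essential difficulty is that $d$-colorfulness is not a linear constraint on the natural flow variables, and the genuine content of the conjecture lies in showing that the combinatorial evidence of $d$-colorfulness in the input can be transported onto the output of a geometric equipartition.
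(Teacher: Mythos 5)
There is a genuine gap, and it is worth stating plainly that the statement you are attacking is the \emph{full} Holmsen--Kyn\v{c}l--Valculescu conjecture, which the paper does not prove: it establishes only the case $m=d$ (as a corollary of Theorem~\ref{equipart}), where $d$-colorfulness comes for free because equipartitioning each individual color class $X_i$ forces every part to contain at least $\lfloor|X_i|/n\rfloor\ge 1$ points of color~$i$. Your first step, compressing the $m$ color classes into $d$ continuous measures by grouping or re-weighting, already fails. For an equipartition of grouped measures to force at least one \emph{point} of each group into every region, each group must contain at least $n$ points; such a grouping of the colors into $d$ groups need not exist even when the combinatorial hypothesis holds. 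Take $d=2$, $k=2$, $n=3$, $m=3$ with three color classes $\{a_1,a_2\}$, $\{b_1,b_2\}$, $\{c_1,c_2\}$ of two points each: the partition $\{a_1,b_1\}$, $\{b_2,c_1\}$, $\{c_2,a_2\}$ shows the hypothesis is satisfied, yet any split of the three colors into two groups leaves one group with only two points, so one of the three Sober\'on--Karasev regions may legitimately receive zero points of that group and end up monochromatic. Re-weighting cannot repair this, because colorfulness counts points, not mass, and splitting a single color across two measures only makes matters worse.

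The second half of your plan is likewise not an argument but a list of hoped-for arguments. The constraint ``each region sees at least $d$ distinct colors'' is a covering condition coupling all points of a region; it is not a capacity bound on any arc, and the paper's Proposition~\ref{rounding-flows} rounds precisely because, after standard transformations, the problem \emph{is} a classical single-commodity flow. Your matroid-intersection reformulation would need the colorfulness requirement to be a matroid constraint on the same ground set as the region-cardinality constraints, which you do not establish, and even then Edmonds' integrality theorem covers the intersection of two matroids, while you would be juggling at least three structures. The local-search alternative comes with no potential function and no proof that a colorfulness-respecting swap decreasing hull overlap always exists, i.e.\ no argument that the search cannot get stuck at a partition with intersecting hulls. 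In short, both the reduction to $d$ measures and the integrality step are unsupported; what you have correctly identified is exactly the obstruction that keeps the conjecture open for $m>d$, not a way around it.
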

Here, the assumption of the existence of a partition
depends only on the number of color classes and their sizes,
and it involves no geometry.
It is obviously a necessary condition. In particular, it implies that $m\ge d$ and $k\ge d$. 

\autoref{AA} answers the case when $k=m=d$.
The case $m\geq k=d=2$ was settled by Aichholzer et al.~\cite{AetAl10} and by Kano, Suzuki and Uno \cite{KSU14}.
Further developments on the planar case were made independently by
Bespamyatnikh, Kirkpatrick and Snoeyink \cite{BKS00}, Ito, Uehara and
Yokoyama \cite{IUY00} as well as Sakai \cite{Sakai02}, who confirmed the conjecture for two colors ($m=d=2$) when the sizes of the color classes are divisible by $n$.
Holmsen, Kyn\v{c}l and Valculescu resolved the conjecture for the remaining cases in the plane, as well as for the case when $k-1=m=d\geq 2$, the latter by giving a particular discretization of the ham-sandwich theorem~\cite{HKV16}. 
Their method is similar to the one used previously by Kano and Kyn\v{c}l~\cite{KK16} to establish the case $m-1=d=k$, who for the proof developed a generalization of the ham-sandwich theorem for $d+1$ measures in $\R^d$, which they called the hamburger theorem.

Holmsen et al.\ emphasized the connection of the conjecture with a continuous analogue for the case $m=d$, proved in the plane by Sakai \cite{Sakai02} and extended to arbitrary dimension by Sober\'on \cite{S12} and independently by Karasev \cite{K11}.
(A~more general version, for functions that are not necessarily measures, was obtained soon after by Karasev, Hubard and Aronov \cite{KHA13} and by Blagojevi\'c and Ziegler \cite{BZ14}.) 

\begin{theorem}[Sober\'on--Karasev, 2010]\label{Spatzenbraten}
	Let $n,d$ be positive integers, and let $\mu_1,\dots,\mu_d$ be absolutely continuous finite measures on $\R^d$ with respect to the Lebesgue measure. 
	Then there exists a partition of\/~$\R^d$ into $n$ convex regions $C_1,\dots,C_n$ that simultaneously equipartitions all $d$ measures, that is,
	\[
	\mu_i(C_j)=\frac{1}{n}~\mu_i(\R^d)
	\]
	for all $i\in\{1,\dots,d\}$ and all $j\in\{1,\dots,n\}$.
\end{theorem}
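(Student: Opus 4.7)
The plan is to parametrize convex $n$-partitions of $\R^d$ by \emph{power diagrams} (generalized Voronoi diagrams) and then reduce the existence of a simultaneous equipartition to a zero-finding problem for an equivariant test map. Given $n$ sites $a=(a_1,\dots,a_n)\in(\R^d)^n$ and weights $\lambda\in\R^n$, define the convex cells
\[
C_j(a,\lambda)=\bigl\{\,x\in\R^d : \langle x,a_j\rangle-\lambda_j\ge \langle x,a_i\rangle-\lambda_i \text{ for all } i\,\bigr\}.
\]
These always form a convex partition of $\R^d$, and every convex partition with nonempty interiors arises this way.

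The first step is a purely analytic lemma: for any absolutely continuous finite measure $\mu$ and any configuration $a$ of distinct sites, there is a weight vector $\lambda^{(\mu)}(a)\in\R^n$, unique modulo additive constants, such that $\mu(C_j(a,\lambda^{(\mu)}(a)))=\mu(\R^d)/n$ for every $j$. This is classical semi-discrete optimal transport: the equipartitioning weights arise as the unique minimizer, modulo shifts, of a strictly convex functional on $\R^n/\R\cdot\mathbf 1$ whose $j$-th partial derivative is $\mu(\R^d)/n-\mu(C_j)$. The assignment $a\mapsto\lambda^{(\mu)}(a)$ is continuous and $\Sigma_n$-equivariant. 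Applying this simultaneously to each of the given measures yields weight vectors $\lambda^{(1)}(a),\dots,\lambda^{(d)}(a)$, and combining them into the test map
\[
F\colon(\R^d)^n\longrightarrow \bigl(\R^n/\R\cdot\mathbf 1\bigr)^{d-1},\qquad F(a)=\bigl(\lambda^{(2)}(a)-\lambda^{(1)}(a),\dots,\lambda^{(d)}(a)-\lambda^{(1)}(a)\bigr),
\]
a zero of $F$ is exactly a configuration whose associated power diagram simultaneously equipartitions $\mu_1,\dots,\mu_d$.

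The hard part is the topological step: forcing $F$ to have a zero. After quotienting out the translation action on sites, the domain consists of $d$ copies and the target of $d-1$ copies of the standard $(n-1)$-dimensional $\Sigma_n$-representation, so a generic zero set is expected to be $(n-1)$-dimensional; the issue is proving non-emptiness. One must first choose a $\Sigma_n$-equivariant compactification of the configuration space on which $F$ extends continuously across the degenerations where sites coalesce or escape to infinity (which correspond to cells of vanishing measure). For $n=p^k$ a prime power, a Borsuk–Ulam-type argument then forces a zero: the Fadell–Husseini index of the boundary sphere, evaluated with respect to the elementary abelian subgroup $(\Z_p)^k\subset\Sigma_n$, is nontrivial, and so $F$ cannot extend to the boundary without vanishing somewhere inside. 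The general case $n=p_1p_2\cdots p_s$ is reduced to the prime case by iteration: apply the result with $n=p_1$ to split $\R^d$ into $p_1$ convex regions each carrying a $1/p_1$-share of every $\mu_i$, and then apply the theorem recursively inside each region. The main obstacle is the equivariant index computation together with the careful control of boundary behaviour that makes it applicable; the optimal-transport step only translates the geometric problem into a purely equivariant-homotopy one.
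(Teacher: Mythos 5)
First, a point of orientation: the paper does not prove Theorem~\ref{Spatzenbraten} at all. It is imported as a black box from Sober\'on \cite{S12} and Karasev \cite{K11}, so there is no in-paper proof to compare against; your sketch has to be judged against the published proofs. In outline you have reproduced their route faithfully: parametrize candidate partitions by power diagrams, use the semi-discrete optimal transport functional to produce, for each measure and each site configuration, the equipartitioning weight vector, assemble the differences into a $\Sigma_n$-equivariant test map into $(\R^n/\R\cdot\mathbf 1)^{d-1}$, kill the map's nonvanishing by an equivariant index argument for $n$ a prime (power), and get general $n$ by the multiplicative iteration. That is essentially the Karasev / Karasev--Hubard--Aronov scheme, and the reduction from general $n$ to primes by recursing inside each region is correct and standard.

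However, as a proof your text has a genuine gap exactly where the theorem's content lies: the statement that the Fadell--Husseini index obstruction is nontrivial, i.e.\ that there is no $\Sigma_n$-equivariant (or $(\Z/p)^k$-equivariant) map from the relevant compactified configuration space to the unit sphere of $\bigl(\R^n/\R\cdot\mathbf 1\bigr)^{d-1}$, is asserted but not argued. This cohomological computation, together with the verification that the test map extends over the chosen compactification without acquiring zeros on the boundary strata (coalescing sites, sites escaping to infinity, cells of vanishing measure), is the entire difficulty; everything before it is translation. Two smaller issues: the claim that \emph{every} convex partition of $\R^d$ with nonempty interiors is a power diagram is false (convex tilings need not be regular), though harmless since the argument only needs power diagrams to form a sufficiently rich family; and the optimal transport functional is convex but not automatically \emph{strictly} convex modulo constants for an arbitrary absolutely continuous measure (e.g.\ with disconnected support), so uniqueness and continuity of $a\mapsto\lambda^{(\mu)}(a)$ require either an additional argument or a canonical selection from the convex set of minimizers. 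To make this a complete proof you would need to supply the index computation and the boundary analysis, or else cite them; as it stands the proposal is a correct roadmap of the known proof rather than a proof.
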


Holmsen, Kyn\v{c}l and Valculescu state: 
\begin{quote}
{``However, going from the continuous version to the discrete version seems to require, in many cases, a non-trivial approximation argument, and we do not see how the continuous results~[\ldots] could be used to settle our Conjecture~3 for the case $m=d$.''}
\end{quote}
Indeed, this is not straightforward. 
However, in this paper we show how it can be done: We confirm
Conjecture~\ref{conjecture} when $m=d$, as a direct corollary of the following main result. 
For this we say that a partition of a finite set $A$ into $n$ parts is an \emph{equipartition} if  each of the parts contains $\lceil\frac{|A|}{n}\rceil$ or $\lfloor\frac{|A|}{n}\rfloor$ elements of $A$.

\begin{theorem}\label{equipart}
	Let $n,d$ be positive integers, and let $X$ be a $d$-colored set of points in general position in~$\R^d$. Then there exists an equipartition of $X$ into $n$ subsets which simultaneously equipartition each of the color classes and whose convex hulls are pairwise disjoint.
\end{theorem}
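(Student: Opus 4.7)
My strategy follows the three-step plan sketched in the abstract: smooth the colored points to continuous measures and apply Theorem~\ref{Spatzenbraten} to produce a convex partition together with a compatible fractional assignment; round the fractional assignment to an integer one by a network-flow argument; and finally use general position to upgrade ``disjoint interiors of regions'' to ``disjoint convex hulls''.

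For each color $i$ and each small $\varepsilon>0$, I would define $\mu_i^\varepsilon$ by placing a unit mass uniformly on $B(x,\varepsilon)$ for every $x\in X_i$, so that each $\mu_i^\varepsilon$ is absolutely continuous with $\mu_i^\varepsilon(\R^d)=|X_i|$. Theorem~\ref{Spatzenbraten} then supplies convex regions $C_1^\varepsilon,\ldots,C_n^\varepsilon$ with $\mu_i^\varepsilon(C_j^\varepsilon)=|X_i|/n$ for all $i,j$. Writing $y_{x,j}^\varepsilon\in[0,1]$ for the fraction of the ball around $x$ that lies in $C_j^\varepsilon$, one automatically has $\sum_j y_{x,j}^\varepsilon=1$ and $\sum_{x\in X_i}y_{x,j}^\varepsilon=|X_i|/n$. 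Passing to a subsequential limit $\varepsilon\to 0$ (using compactness of the vectors $y^\varepsilon$ in $[0,1]^{|X|\cdot n}$ together with a compact parameterization of the Soberón--Karasev partitions, for example via the generalized power-diagram data underlying the proof of Theorem~\ref{Spatzenbraten}) yields a convex partition $(C_1,\ldots,C_n)$ of $\R^d$ and a fractional assignment $y_{x,j}\ge 0$ with $\sum_j y_{x,j}=1$, $\sum_{x\in X_i}y_{x,j}=|X_i|/n$, and the support property $y_{x,j}>0 \Rightarrow x\in\overline{C_j}$.

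To round, I would set up a flow network with source $s$, sink $t$, a color node $c_i$ for each color, a point node $v_x$ for each $x\in X$, bucket nodes $w_{i,j}$ for every pair $(i,j)$, and region nodes $r_j$. The arc $s\to c_i$ is forced to carry exactly $|X_i|$; each $c_i\to v_x$ with $x\in X_i$ and each $v_x\to w_{i,j}$ with $y_{x,j}>0$ (and $i$ the color of $x$) has capacity $1$; each $w_{i,j}\to r_j$ has lower bound $\lfloor|X_i|/n\rfloor$ and upper bound $\lceil|X_i|/n\rceil$; and each $r_j\to t$ has lower bound $\lfloor|X|/n\rfloor$ and upper bound $\lceil|X|/n\rceil$. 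Sending $y_{x,j}$ along $v_x\to w_{i,j}$ and propagating gives a fractional feasible flow of value $|X|$. Integrality of flow polyhedra with integer capacities and bounds then produces a feasible integer flow, which assigns each $x$ to a unique $C_j$ with $x\in\overline{C_j}$ and simultaneously realizes the color-wise and the global equipartition.

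Finally, writing $X_j$ for the points assigned to $C_j$, I would verify pairwise disjointness of the convex hulls as follows. Separating the convex sets $C_j$ and $C_k$ by a hyperplane $H$ forces any putative common point $p\in\conv(X_j)\cap\conv(X_k)$ to lie on $H$; writing $p$ as a convex combination of points of $X_j$, all of which sit in the closed halfspace on the $C_j$ side of $H$, then forces every contributing point of $X_j$ to lie on $H$ itself, and similarly for $X_k$. By general position in $\R^d$, $|X\cap H|\le d$ and these points are affinely independent, so they form the vertex set of a simplex; disjoint vertex subsets of a simplex have disjoint convex hulls, a contradiction. The step I expect to be the main obstacle is the limit $\varepsilon\to 0$, where one must ensure that the partitions and the fractional assignments can be extracted along a common convergent subsequence and that the support condition $y_{x,j}>0 \Rightarrow x\in\overline{C_j}$ survives; the rest is essentially combinatorial flow integrality and elementary convex geometry.
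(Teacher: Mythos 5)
Your overall architecture coincides with the paper's: replace points by $\varepsilon$-balls to get absolutely continuous measures, invoke Theorem~\ref{Spatzenbraten}, encode the ambiguous assignments as a three-layer flow network with floor/ceiling bounds, and round by flow integrality. The network you describe is, up to the cosmetic source/sink vertices, exactly the graph $D$ of the paper, and your feasibility check of the fractional flow is the same computation. Where you genuinely diverge is the geometric endgame. The paper keeps $\varepsilon>0$ \emph{fixed} and proves disjointness of the convex hulls by a perturbation argument (Lemma~\ref{convex general}): it only needs that each assigned point's ball meets the open halfspace on its region's side, and then moves the separating hyperplane continuously until it passes through at most $d$ points, which can be perturbed off it. You instead send $\varepsilon\to0$ so that every assigned point lies on the closure of its region, and then argue that a common point of two hulls would force several points of $X$ onto the separating hyperplane $H$, where general position gives at most $d$ affinely independent points, whose disjoint subsets have disjoint hulls. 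Granted the limit, this endgame is correct and arguably more elementary than the paper's perturbation lemma.

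The gap is precisely the limit $\varepsilon\to0$, which you name as the main obstacle but do not carry out; it is exactly the ``non-trivial approximation argument'' that Holmsen, Kyn\v{c}l and Valculescu warned about and that the paper's fixed-$\varepsilon$ route is designed to avoid. Compactness of the vectors $y^\varepsilon$ in $[0,1]^{|X|\cdot n}$ does not by itself give you limit regions; you need a convergent subsequence of the partitions themselves (say in the Fell/Kuratowski topology on closed sets), and then three nontrivial facts: (a) the support property survives, i.e.\ $y_{x,j}>0$ forces $x\in\overline{C_j}$ for the \emph{limit} region, which requires upper semicontinuity of the regions along the subsequence; (b) the limit regions, whose closures may overlap and some of which may degenerate to lower-dimensional convex sets (a thin region can collapse as $\varepsilon\to0$), can still be weakly separated pairwise by hyperplanes --- this needs an argument that their (relative) interiors remain disjoint, deduced from disjointness of the approximating regions; and (c) the measure identities pass to the limit. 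All of this is plausibly repairable, but it is real work, not a routine compactness remark, and your final separation argument genuinely needs it: at fixed $\varepsilon>0$ an assigned point can sit at distance up to $\varepsilon$ on the \emph{wrong} side of $H$, so the points of $X_j$ contributing to a common hull point are only confined to a slab around $H$ rather than to $H$ itself, and the affine-independence argument collapses. If you do not want to fight the limit, you should replace your last step by a fixed-$\varepsilon$ perturbation argument in the spirit of Lemma~\ref{convex general}.
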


To see that Theorem~\ref{equipart} implies Conjecture~\ref{conjecture} for the case $m=d$, observe that in this case the condition on $X$ of admitting a partition into $n$ pairwise disjoint $d$-colorful sets of size $k$ implies that each color class has at least $n$ elements. 
In an equipartition of a color class $X_i$, each part contains at
least $\floor{\frac{|X_i|}{n}}\geq 1$ points.
Thus, each part of $X$ contains all $d$ colors.
With $|X|=kn$ and an equipartition of $X$, we get $n$ sets of size $k$ that each contain at least one point of each of the $d$~colors.

\section{Preliminaries} 
\label{toolsection}

In order to discretize Theorem~\ref{Spatzenbraten}, we start by employing a classical idea 
(see Alon \& Akiyama \cite{AA89}):
We replace the points in $X$ with small enough closed balls and then define measures on these. 
The problem with applying the continuous result
is that the boundaries of the regions may cut through some balls,
see Figure~\ref{fig:small_example} (left). 
We will assign every such ``ambiguous'' point to one of the regions intersected by the ball centered at the point.

The following lemma shows that, if the radius $\varepsilon$ of the balls is
small enough, we will always get a partition of $X$ with disjoint
convex hulls, no matter how we resolve the ambiguities. 
In Section~\ref{main-proof} we will prove that we can resolve these ambiguities
in such a way that we get an equipartion of the full point set~$X$ as well as of each of the color classes $X_i$.
Note that this does {not} guarantee the existence of a \emph{partition of $\R^d$} into $n$ convex regions such that each region would contain one of the disjoint convex hulls.

By general position, no
$\ell$ points of $X$ with $1\le\ell\le d$ lie on a common
 $(\ell-1)$-flat (affine subspace of
dimension~$\ell-1$). 
When we replace the points by balls, we make their radius 
small enough so that no
$\ell$ of these \emph{balls} are intersected by
any $(\ell-1)$-flat.
\begin{lemma}\label{convex general}  
	Let $P\subseteq \R^d$ be a finite set of points in general
        position, and let $\varepsilon>0$ be chosen such that no 
$\ell$ closed balls $B_\varepsilon(x)$ of radius~$\varepsilon$
centered at points $x$ from~$P$ with $1\le \ell \le d$ can be
intersected by a common $(\ell-1)$-flat.
	Suppose we are given an affine hyperplane $H\subseteq \R^d$ and a partition of $P=P^+\sqcup P^-$ satisfying
	\[
	P^+\subseteq\{x\in P : B_\varepsilon(x)\cap H^+\neq\emptyset\}
	\quad\text{and}\quad
	P^-\subseteq\{x\in P : B_\varepsilon(x)\cap H^-\neq\emptyset\},
	\]
	where $H^+$ and $H^-$ are the open half-spaces determined by $H$.  
	Then 
	\[
	\conv P^+\cap\conv P^-=\emptyset.
	\]
\end{lemma}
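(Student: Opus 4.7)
The plan is to argue by contradiction: assume there is a point $z\in\conv P^+\cap\conv P^-$ and build from it a clash with the ball--flat incidence hypothesis. First I would extract a combinatorial witness via Carath\'eodory, starting from two convex representations of $z$ and shrinking the supports to subsets $S^+\subseteq P^+$ and $S^-\subseteq P^-$ of minimal total size. General position of $P$ forces $|S^+|+|S^-|=d+2$, because any $\le d+1$ points of $P$ are the vertex set of a simplex whose disjoint faces cannot meet. So $(S^+, S^-)$ is a primitive Radon partition, with all Radon coefficients strictly positive and a unique Radon point $z$ in the relative interior of both hulls.

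Next I would use $H$ to drive a deformation. Write $a$ for a unit normal of $H$ with $H^\pm=\{x:\pm\langle a,x\rangle>0\}$; the hypothesis yields $\langle a,p\rangle>-\varepsilon$ for $p\in S^+$ and $\langle a,q\rangle<\varepsilon$ for $q\in S^-$. For each $p\in S^+$ I set $\tilde p := p+\max\{0,-\langle a,p\rangle\}\, a\in B_\varepsilon(p)\cap\overline{H^+}$, and symmetrically $\tilde q\in B_\varepsilon(q)\cap\overline{H^-}$. The linear interpolation $p(t):=(1-t)p+t\tilde p$ with $t\in[0,1]$ keeps every point inside its ball. The target is to track the Radon structure along this deformation: if it survives, then at $t=1$ the hulls $\conv\{\tilde p:p\in S^+\}\subseteq\overline{H^+}$ and $\conv\{\tilde q:q\in S^-\}\subseteq\overline{H^-}$ can only meet on $H$, forcing the Radon point $z(1)\in H$. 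Writing $z(1)=\sum_{p\in S^+}\lambda_p\tilde p$ with $\lambda_p>0$ and $\langle a,\tilde p\rangle\ge0$ then forces every $\tilde p\in H$, hence every $p\in S^+$ lies in $\overline{H^-}$ within $\varepsilon$ of $H$, and symmetrically for $S^-$. Consequently all $d+2$ balls $\{B_\varepsilon(x):x\in S^+\cup S^-\}$ are intersected by the single hyperplane $H$, contradicting the assumption on $\varepsilon$.

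The delicate step I expect to be the main obstacle is the persistence of the primitive Radon structure under the deformation: the unique affine dependence among the $d+2$ moving points, and the signs of its coefficients, must remain unchanged for all $t\in[0,1]$. Any failure requires some $d+1$ of the moved points to become affinely dependent at some $t^\star$, i.e.\ to lie on a common hyperplane. Because each moved point stays within distance $\varepsilon$ of its original, the corresponding $d+1$ balls would then be intersected by that hyperplane --- and this is precisely what the ball--flat incidence hypothesis rules out. Pinning down this alternative cleanly, so that every degeneration of the Radon combinatorics is matched with a concrete violation of the hypothesis on $\varepsilon$, is the real content of the proof; once it is in place, the conclusion in the previous paragraph follows.
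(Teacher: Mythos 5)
Your argument is correct, but it takes a genuinely different route from the paper's. The paper deforms the \emph{hyperplane}: it picks witness points inside the balls met by $H$, moves them to the ball centers while letting $H$ follow along (a reverse induction on the number $\ell$ of balls met keeps this motion well defined), and finally perturbs $H$ off the ball centers; this produces a hyperplane \emph{strictly separating} $P^+$ from $P^-$, slightly more than disjointness of the hulls. You instead argue by contradiction, extract a minimal Radon witness $S^+\sqcup S^-$ of size exactly $d+2$ (general position gives the lower bound, a basic-solution/Carath\'eodory argument the upper bound $d+2$, which you do need so that the affine dependence is unique), and deform the \emph{points} into the closed half-spaces within their balls. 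The step you flag as still needing to be pinned down is in fact already complete as you state it: the dependence coefficients are, up to a global sign, polynomial cofactors in $t$, so the sign pattern can only change if some coefficient vanishes, i.e.\ if some $d+1$ of the moving points become affinely dependent and hence lie on a common $(d-1)$-flat meeting the corresponding $d+1$ balls --- forbidden by the $\ell=d$ instance of the hypothesis; the endgame ($d+2$ points on $H$, hence $d+2$ balls met by $H$) is forbidden by the same instance. A notable difference in economy: your proof uses only the $\ell=d$ case of the condition on $\varepsilon$, whereas the paper's induction needs the whole range $1\le\ell\le d$ to keep the moving flat $L$, and the hyperplane containing it, well defined. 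What the paper's route buys is an explicit separating hyperplane and freedom from Radon machinery; what yours buys is the avoidance of the induction and of the continuity claims involved in letting a hyperplane follow moving points.
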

\begin{proof}
  The proof is based on the perturbation argument from  
  \cite[Proof of Lemma~2]{AA89}, which we make more
  explicit.

  We perform a reverse induction on the number $\ell$ of
  $\eps$-balls intersected by $H$, starting with the maximum possible
  value $\ell=d$ and proceeding downwards. 
  For the induction basis, when
  $\ell=d$, we choose points $q_1,\ldots,q_d$ in the intersection of
  the $d$ balls with $H$. We move them straight to the ball
  centers at constant speed, and we let the hyperplane $H$ through
  the points follow along. By our assumptions, $H$ is always
  uniquely defined throughout the motion, and it will not intersect any
  other ball at any time. When the points $q_i$ arrive, $H$ goes
  \emph{through} $d$ points of~$P$. We now perturb each point
  $q_i$ perpendicular to $H$ in the appropriate direction, so that the
  corresponding point of $P$ will be on the desired side, $H^+$ or
  $H^-$. The position of the remaining $n-d$ points with respect to $H$ is
  unchanged throughout this process, and it was correct from the beginning since $H$ did not
  intersect their balls.
  Thus, we have a hyperplane strictly separating the sets $P^+$ and
  $P^-$.  Consequently, $\conv P^+\cap \conv P^-=\emptyset$.

  Let us now consider the case that $H$ intersects $\ell<d$ balls.  As
  above, we choose points $q_1,\ldots,q_\ell$ from $H$ inside these
  balls, and we move them towards the ball centers. The
  motion of~$H$ is no longer uniquely defined, but since
  the moving points are never in degenerate position,
 they span an $(\ell-1)$-flat $L$ that moves continuously,
  and we can continuously move the hyperplane $H$ while containing $L$.
  If, during this motion, $H$~intersects an additional ball, we have
  increased~$\ell$ and we proceed by induction. Otherwise, we
  arrive at a position where $H$ goes through $\ell$ points of $P$,
  and we perform the perturbation as above.  
\end{proof}

In order to assign boundary points to regions we will set up a flow network with a fractional flow;
from this we obtain an integer flow, which in turn will determine the assignment.

In a directed graph $D=(V,A)$ with a set of vertices $V$  and a set of arcs $A$, a \emph{flow} is a function 
$f\colon A\to\R$ that assigns a real number to each arc.
The \emph{excess} of the flow $f\colon  A\to\R$ at the vertex~$v$ of the graph $V$ is the difference
between the inflow and the outflow:
\[
	\excess(f,v) := \sum_{(u,v)\in A}f(u,v) - \sum_{(v,w)\in A}f(v,w)
\]
To obtain an integer flow from a fractional one, we will use the following statement.

\begin{proposition}\label{rounding-flows}
  Let $D=(V,A)$ be a directed graph, and let $F_-,F_+\colon A\to\Z$,
and $E_-,E_+\colon V\to\Z$ be integer-valued functions on the arcs and
on the vertices, respectively. 
		If there is some flow $f\colon A\to\R$ on $D$ such that  
		\[
		F_-(a)\leq f(a)\leq F_+(a)
		\quad\text{for }a\in A
		\qquad\text{and}\qquad
		E_-(v)\leq \excess(f,v)\leq E_+(v)
		\quad\text{for }v\in V,
		\]
		then there is also an integer flow $f'\colon A\to\Z$ that satisfies the same bounds. 
\end{proposition}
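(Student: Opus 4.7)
The plan is to reduce the statement to the classical integrality theorem for feasible circulations in a network with integer arc capacities, which in turn relies on the total unimodularity of the node–arc incidence matrix of a directed graph.

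First I would augment the digraph $D$ by a single auxiliary vertex $z$, together with, for each $v \in V$, a new arc $b_v$ from $v$ to~$z$. On each new arc I set the bounds $F_-(b_v) := E_-(v)$ and $F_+(b_v) := E_+(v)$, and I extend the given flow~$f$ by declaring $f(b_v) := \excess(f,v)$. These values satisfy the arc bounds on the new arcs by hypothesis. Moreover, the excess of $f$ at every original vertex becomes zero in the augmented graph, because the outgoing arc $b_v$ absorbs precisely what was formerly the excess at~$v$; and the excess at $z$ equals $\sum_{v \in V} \excess(f,v)$, which is zero since the sum of the excesses of any flow on $D$ vanishes (each arc contributes $+f(a)$ at its head and $-f(a)$ at its tail). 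Thus $f$ extends to a circulation on the augmented digraph satisfying integer arc bounds $F_-, F_+$.

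Next I would invoke the standard fact that in a directed graph with integer lower and upper bounds on each arc, if a real-valued feasible circulation exists then an integer feasible circulation exists. Concretely, the polytope $\{g : Mg = 0,\ F_- \leq g \leq F_+\}$, with $M$ the node–arc incidence matrix of the augmented graph, has all vertices integer because $M$ is totally unimodular and the bounds are integers; and it is nonempty because it contains the $f$ built in the first step. Choosing an integer vertex $f'$ of this polytope and restricting to the arcs of the original graph $D$ yields an integer flow satisfying the required arc bounds, while its excess at an original vertex $v$ equals $f'(b_v) \in [E_-(v), E_+(v)]$ by the bound on the new arc.

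The only nontrivial ingredient is the integrality theorem invoked in the second step. If one prefers a self-contained argument avoiding total unimodularity, one can proceed constructively: given a non-integer feasible circulation on the augmented graph, restrict attention to the subdigraph $D'$ of arcs carrying non-integer flow. Since all arc bounds are integers and every vertex has integer (in fact zero) excess, at each vertex of $D'$ the fractional parts of incoming and outgoing flows must cancel, so every vertex of $D'$ has both positive in- and out-degree; hence $D'$ contains a directed cycle~$C$. Pushing flow around $C$ by the largest $\varepsilon > 0$ for which no arc bound is violated makes at least one additional arc integer without changing any excess; iterating terminates with an integer feasible circulation. The bookkeeping here—verifying that both forward and backward pushes are possible because no non-integer flow can already be at an integer bound—is the main obstacle in the hands-on version, and it is what the total-unimodularity shortcut hides.
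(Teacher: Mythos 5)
Your main argument is correct, and it takes a route that the paper only mentions in passing: you reduce to a \emph{circulation} problem by adding a single auxiliary vertex $z$ and arcs $b_v=(v,z)$ with bounds $[E_-(v),E_+(v)]$, check that the given $f$ extends to a feasible circulation, and then invoke integrality of the circulation polytope (total unimodularity of the incidence matrix, equivalently the integer version of Hoffman's circulation theorem). The paper instead sketches a chain of standard transformations to a classical single-source/single-sink maximum-flow instance: shift positive lower arc bounds to zero while adjusting the excess bounds, fix each vertex's excess at its lower bound and absorb the slack $E_+(v)-E_-(v)$ through an arc into a balancing sink, and finally attach a super-source and super-sink; integrality then comes from the integral max-flow theorem. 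The paper explicitly notes the Hoffman-circulation route (citing Schrijver, Corollary 11.2i) as an alternative, so your proof is a legitimate and in fact more economical packaging of the same underlying integrality fact: one auxiliary vertex and one appeal to total unimodularity, versus several bookkeeping transformations. Both are complete proofs at the same level of rigor.

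One small inaccuracy in your optional constructive aside: from the fact that each vertex of the fractional subdigraph $D'$ has integer excess you may conclude that no vertex of $D'$ has degree one, hence $D'$ contains a cycle in the \emph{underlying undirected graph}; but you cannot conclude that every vertex has both positive in- and out-degree (two incoming arcs with flows $\tfrac12$ and $\tfrac12$ already have integer excess contribution), so $D'$ need not contain a \emph{directed} cycle. The rounding step still works: traverse an undirected cycle, add $\varepsilon$ on forward arcs and subtract $\varepsilon$ on backward arcs, which preserves all excesses. Since this aside is not what your proof relies on, it does not affect the correctness of your argument.
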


\begin{proof}
  This is a variation of the well-known integrality results on network flows.
Classical flow networks involve only a single vertex with negative
excess (source) and a single vertex with positive excess (sink),
conserving the flow at all other vertices. The network we consider has
several sources and sinks. Additionally, the excesses at these
vertices
are not fixed but allowed to vary within bounds,
and we have lower as well as upper
capacities on the arcs.

Such networks can be reduced to the classical situation by
standard transformations;
We sketch these transformations.
(See 
\cite[Corollary
11.2i]{Schrijver:VolA} for an alternative approach via Hoffman's
circulation theorem.)
\begin{itemize}
\item 
First, any arc $(u,v)$ with a positive lower bound $F_-(u,v)$ and
upper bound $F_+(u,v)$ is replaced by a conventional arc
with nonnegative flow and
upper bound $F_+'(u,v) = F_+(u,v)-F_-(u,v)$.
 To compensate this
offsetting of the flow, the excesses have to
be adapted. We
 subtract
$F_-(u,v)$ from
 the excess bounds
$E_-(v)$ and
$E_+(v)$ at $v$
and add it
 to the excess bounds
$E_-(u)$ and
$E_+(u)$ at~$u$.
In the network that we will use,
there are no arcs
 with negative bounds.
Such an arc $(u,v)$ 
could be treated by
introducing the reverse arc $(v,u)$.
\item 
To deal with the variation of the excess,
 we create an additional ``balancing sink'' $B$.
We fix the excess of each vertex at its lower bound. Thus,
a vertex $v$ with excess bounds
$E_-(v)$ and
$E_+(v)$
 (as modified in the previous step)
 is declared to be
a sink with demand $E_-(v)$ if 
 $E_-(v)\ge 0$, or
a source with supply $-E_-(v)$ if 
 $E_-(v)<0$.
The excessive inflow at $v$ is then absorbed by
an arc $(v,B)$ of capacity
$E_+(v)-E_-(v)$.
We fix
the demand of $B$ to make the overall sum of demands equal to
the overall supply.
\item 
Finally, we add a
 new super-source $S$
with an arc to every source $v$, of capacity $F_+(S,v)$ equal to the
supply at~$v$,
 and a new super-sink $T$,
with an arc from every sink $v$ to~$T$, of capacity $F_+(v,T)$ equal to the
demand at~$v$.
\end{itemize}
Flows in the original graph~$D$ correspond, in the transformed network, to
classical flows from $S$ to $T$ that saturate all edges out
of~$S$.  Integrality is preserved throughout the transformation.
%
        \end{proof}

\section{Proof of the main result}
\label{main-proof}
\begin{proof}[Proof of Theorem~\ref{equipart}]
	 Let $n$ and $d$ be positive integers, and let $X$ be a $d$-colored set of points in general position in $\R^d$.
	 Using the tools presented in Section \ref{toolsection}, we
         now prove our claim that we can partition $X$ into $n$ sets
         of size $\floor{\frac{|X|}{n}}$ or $\ceil{\frac{|X|}{n}}$
         that have pairwise disjoint convex hulls and simultaneously equipartition the color classes. 
	 The proof is done in several steps.

	\smallskip\noindent
	(1) \emph{From points to measures.}
	\\
	We replace each point $x\in X$ by a ball $B_\varepsilon(x)$ centered at $x$, with $\varepsilon>0$ a real number small enough such that no $\ell$-flat with $\ell<d$ intersects more than $\ell+1$ balls.
	With each ball centered at a point in~$X$, we associate a uniformly distributed measure of unit total mass. 
	For each $i\in\{1,\dots,d\}$ and for every measurable subset $A\subseteq\R^d$, let $\mu_i(A)$ be the total measure of balls centered at points in $X_i$ that is captured by $A$. 
	Clearly, $\mu_1,\dots,\mu_d$ are absolutely continuous finite measures on $\R^d$ with $\mu_i(\R^d)=|X_i|$. 
	According to Theorem~\ref{Spatzenbraten}, there exists a partition of~$\R^d$ into $n$ convex regions $C_1,\dots,C_n$ 
	which equipartitions the measures, that is, 
	\[\mu_i(C_j)=\frac{|X_i|}{n}\]
	for all $ i\in\{1,\dots,d\}$ and all $j\in\{1,\dots,n\}$.
	
	\smallskip\noindent
	(2) \emph{A directed graph of incidences.}
	\\
	In order to apply Lemma~\ref{convex general}, we show the existence of an 
	assignment of the points in $X$ to the $n$~regions
	$C_1,\dots,C_n$ such that for each point $x$ assigned to a region $C_j$, 
	$B_\varepsilon(x)$ intersects $C_j$, while in total $\lfloor\frac{|X|}{n}\rfloor$ or $\lceil\frac{|X|}{n}\rceil$ points are assigned to each region, with $\lfloor\frac{|X_i|}{n}\rfloor$ or $\lceil\frac{|X_i|}{n}\rceil$ of color class~$i$ for every $i\in\{1,\dots,d\}$. Such an assignment may be modeled as an integer flow from the points in $X$ to the regions in the partition, where each $x\in X$ has an outflow of $1$ and each region has an inflow of $\lfloor\frac{|X|}{n}\rfloor$ or $\lceil\frac{|X|}{n}\rceil$, the number of points assigned to it. To guarantee an equipartition of the color classes, we add a middle layer of vertices, one for each color and region, and set the constraints on these vertices and arcs accordingly.
	
	We define the directed graph $D=(V,A)$ with $V=X\sqcup Y\sqcup
	Z$,
	where  \[Y=\big\{\,y_i^j : 1\leq i \leq d,\ 1\leq j\leq n\,\big\}\]
	contains a vertex $y_i^j$ for each color~$i$ and each
	region $C_j$, and the set 
	$Z=\{C_1,\dots,C_n \}$ contains a vertex for each region.
	We have arcs from a point $x\in X$ to those vertices in $Y$ corresponding to the color of $x$ and the regions incident to the ball $B_\varepsilon(x)$ centered at $x$, as well as arcs from the vertices in $Y$ to their respective region in $Z$. More precisely, the set of arcs is
\begin{align*}
 A \coloneqq\ &\bigl\{\, (x,y_i^j) : 1 \leq i\leq d,\ 1\leq j\leq n,\ x\in X_i,\ B_\varepsilon(x)\cap C_j\neq 0\,\bigr\}\\
 &\cup\ \bigl\{ (y_i^j,C_j) : 1 \leq i\leq d,\ 1\leq j\leq n \,
		\bigr\}.
\end{align*}
\begin{figure}
	\centering
	\includegraphics[height=61mm]{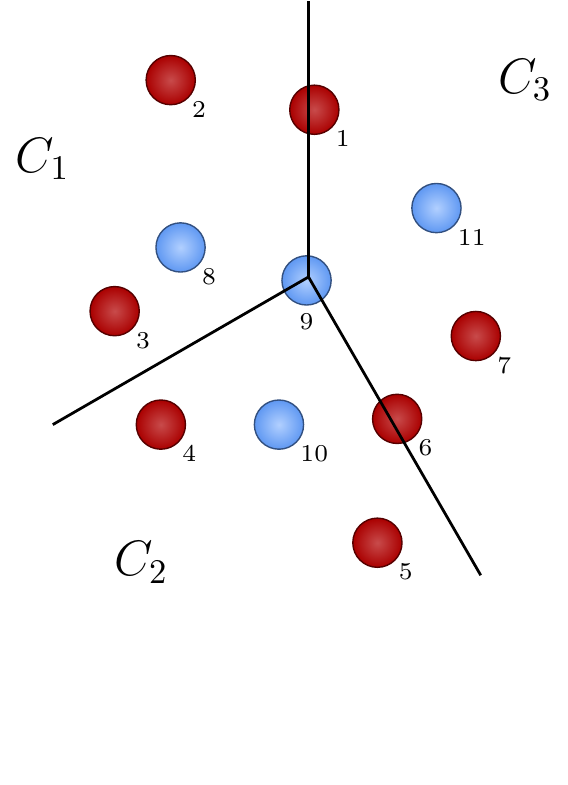}
	\qquad\qquad
	\includegraphics[height=75mm]{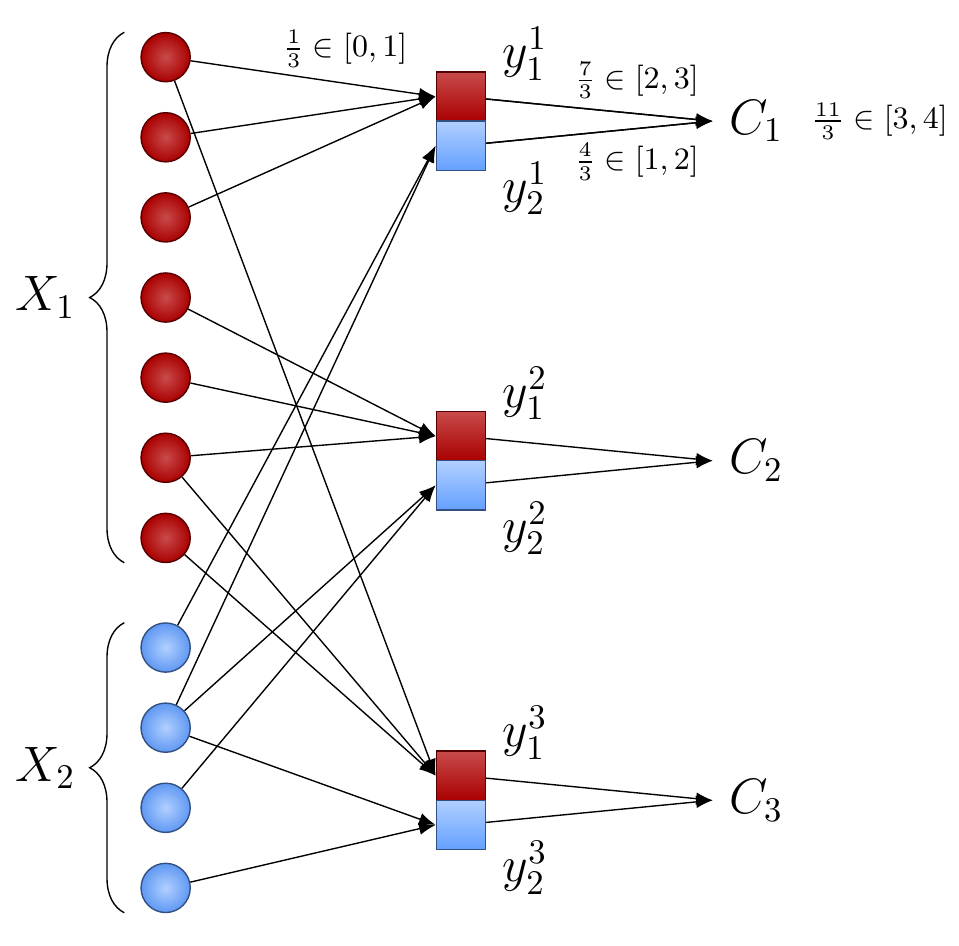}
	\caption{\small A configuration of 11 points/small balls
		with $d=2$ colors
		in $d=2$ dimensions,
		partitioned into $n=3$ regions,
		and the corresponding directed graph $D$ with some upper and lower bounds on the flow and its excess indicated as intervals.
		}
	\label{fig:small_example}
\end{figure}%
	For the vertices of $D$, we define lower bounds $E_-\colon V\to\Z$ and upper bounds $E_+\colon V\to\Z$ on the excess as follows:
	\begin{align*}
	\hskip 0,2\textwidth
 	E_-(x)&\coloneqq-1 &  &\qquad & E_+(x)&\coloneqq -1
	\hskip 0,2\textwidth
	\\
	 E_-(y)&\coloneqq0 & &\qquad & E_+(y)&\coloneqq 0\\
	 E_-(C_j)&\coloneqq\left\lfloor{\frac{|X|}{n}}\right\rfloor & &\qquad & E_+(C_j)&\coloneqq\left\lceil{\frac{|X|}{n}}\right\rceil
\\
\intertext{For the arcs of $D$, we define lower bounds $b_-\colon A\to\Z$ and upper bounds $b_+\colon A\to\Z$ as follows:
}
	b_-(x,y)&\coloneqq0 & &\qquad & b_+(x,y)&\coloneqq 1
	\\
	b_-(y_i^j,C_j)&\coloneqq\left\lfloor{\frac{|X_i|}{n}}\right\rfloor & &\qquad & b_+(y_i^j,C_j)&\coloneqq\left\lceil{\frac{|X_i|}{n}}\right\rceil
	\end{align*}
	In all five cases, the 
the lower bounds don't exceed 
the upper bounds.
	
	\smallskip\noindent
	(3) \emph{A fractional flow.}
	\\	
	We now construct a fractional flow $f\colon A\to\R$ by setting
	\[ 
	f(x,y_i^j)\coloneqq\mu_i(B_\varepsilon(x)\cap C_j) \text{\qquad and\qquad } f(y_i^j, C_j)\coloneqq\frac{|X_i|}{n}.
	\]
	The lower and upper constraints on the arcs are trivially satisfied, \[b_-(a)\leq f(a)\leq b_+(a) \text{\qquad for all \ } a\in A.\]
	With $\mu_i(B_\varepsilon(x))=1$ for all $x\in X_i$, we get 
	\[ 
	E_-(x)=-1=\excess(f,x)=-\sum_{j=1}^{n}\mu_i(B_\varepsilon(x)\cap C_j)=-1= E_+(x).
	\]
	With $\mu_i(C_j)=\frac{|X_i|}{n}=f(y_i^j,C_j)$ for a vertex $y_i^j\in Y$, the values yield
	\[ 
	E_-(y_i^j)= 0=\excess(f,y_i^j) = \sum_{x\in X} \mu_i(B_\varepsilon(x)\cap C_j) - f(y^j_i,C_j)  
	= 0 = E_+(y_i^j).
	\]
	Lastly, for  a $C_j\in Z$ we get
	\[ E_-(C_j)=\left\lfloor\frac{|X|}{n}\right\rfloor \leq \excess(f,C_j) = \sum_{i=1}^{d}f(y_i^j,C_j)= \frac{|X|}{n} \leq \left\lceil\frac{|X|}{n}\right\rceil = E_+(C_j),\]
	and consequently $E_-(v)\leq \excess(f,v)\leq E_+(v)$ for all $v\in V$.
	
	\smallskip\noindent
	(4) \emph{Back to geometry.}
	\\		
	From this fractional flow, Proposition~\ref{rounding-flows} produces an integer flow on $D$ that satisfies the constraints given by functions $b_-,b_+$ and~$E_-,E_+$.  
	This in turn gives an assignment of points into sets of size $\lfloor\frac{|X|}{n}\rfloor$ and~$\lceil\frac{|X|}{n}\rceil$, equipartitioning $X$. 
	The middle layer of $D$ ensures that each of the sets contains $\floor{\frac{|X_i|}{n}}$ or $\ceil{\frac{|X_i|}{n}}$ points from the color class $X_i$, resulting in a simultaneous equipartition of $X$ and all $d$ color classes.
	
	We now want that, for any two regions $C_j$ and $C_k$, the sets of points $P^+$ assigned to~$C_j$ and $P^-$ assigned to~$C_k$ have disjoint convex hulls.
	For each point $x$ assigned to a region, $B_\varepsilon(x)$ intersects that region, by the definition of the arc set $A$.
	We may therefore apply Lemma~\ref{convex general} to the set $P=P^+\sqcup P^-$  
	and conclude that the convex hulls of $P^+$ and $P^-$ are disjoint.
\end{proof}

\subsubsection*{Acknowledgements}
We are grateful to the four DCG referees for many useful comments and suggestions.

\bibliography{b}{}

\begin{thebibliography}{10}

\bibitem{AetAl10}
O.~Aichholzer, S.~Cabello, R.~Fabila-Monroy, D.~Flores-Pe\~{n}aloza, T.~Hackl,
  C.~Huemer, F.~Hurtado, and D.~R. Wood.
\newblock Edge-removal and non-crossing configurations in geometric graphs.
\newblock {\em Discrete Math. Theor. Comput. Sci. (DMTCS)}, 12:75--86, 2010.

\bibitem{AA89}
J.~Akiyama and N.~Alon.
\newblock Disjoint simplices and geometric hypergraphs.
\newblock In {\em Combinatorial Mathematics: Proc. of the Third International
  Conference, New York 1985}, volume 555 of {\em Annals of the New York Academy
  of Sciences}, pages 1--3, 1989.

\bibitem{BKS00}
S.~Bespamyatnikh, D.~Kirkpatrick, and J.~Snoeyink.
\newblock Generalizing ham sandwich cuts to equitable subdivisions.
\newblock {\em Discrete Comput. Geom.}, 24:605--622, 2000.

\bibitem{BZ14}
P.~V.~M. Blagojevi{\'{c}} and G.~M. Ziegler.
\newblock Convex equipartitions via {E}quivariant {O}bstruction {T}heory.
\newblock {\em Israel Journal of Mathematics}, 200:49--77, 2014.

\bibitem{HKV16}
A.~F. Holmsen, J.~Kyn\v{c}l, and C.~Valculescu.
\newblock Near equipartitions of colored point sets.
\newblock {\em Comput. Geom. Theory Appl.}, 65:35–42, 2017.

\bibitem{IUY00}
H.~Ito, H.~Uehara, and M.~Yokoyama.
\newblock $2$-dimension ham sandwich theorem for partitioning into three convex
  pieces.
\newblock In {\em Discrete and Computational Geometry: Japanese Conference,
  JCDCG'98 Tokyo, Japan, December 9--12, 1998. Revised Papers}, pages 129--157.
  Springer-Verlag, 2000.

\bibitem{KK16}
M.~Kano and J.~Kyn\v{c}l.
\newblock The hamburger theorem.
\newblock {\em Comput. Geom. Theory Appl.}, 68:167--173, 2018.

\bibitem{KSU14}
M.~Kano, K.~Suzuki, and M.~Uno.
\newblock Properly colored geometric matchings and $3$-trees without crossings
  on multicolored points in the plane.
\newblock In {\em Discrete and Computational Geometry and Graphs}, volume 8845
  of {\em Lecture Notes in Comput. Sci.}, pages 96--111. Springer-Verlag, 2014.

\bibitem{K11}
R.~Karasev.
\newblock Equipartition of several measures.
\newblock Preprint, June 2013, 11 pages,
  \href{https://arxiv.org/abs/1011.4762}{{arXiv:1011.4762v7}}.

\bibitem{KHA13}
R.~Karasev, A.~Hubard, and B.~Aronov.
\newblock Convex equipartitions: {T}he spicy chicken theorem.
\newblock {\em Geometriae Dedicata}, 170:263--279, 2014.

\bibitem{Sakai02}
T.~Sakai.
\newblock Balanced convex partitions of measures in {${\mathbb R}^2$}.
\newblock {\em Graphs and Combinatorics}, 18:169--192, 2002.

\bibitem{Schrijver:VolA}
A.~Schrijver.
\newblock {\em Combinatorial {O}ptimization---{P}olyhedra and {E}fficiency.
  {V}ol. {A}: Paths, Flows, Matchings}, volume~24 of {\em Algorithms and
  Combinatorics}.
\newblock Springer-Verlag, Berlin, 2003.

\bibitem{S12}
P.~Sober\'{o}n.
\newblock Balanced convex partitions of measures in {${\mathbb R}^d$}.
\newblock {\em Mathematika}, 58:71--76, 2012.

\end{thebibliography}
\bibliographystyle{abbrv}

\end{document}